\newcommand{\spec}{\operatorname{Spec}}
\newtheorem{thm}{\indent \sc Theorem}[section]
\newtheorem{cor}[thm]{\indent \sc Corollary}
\newtheorem{lem}[thm]{\indent \sc Lemma}
\newtheorem{rem}[thm]{\indent \sc Remark}
\newcommand{\cF}{{\mathcal F}}
\begin{document}
\title{On the Hasse Principle for the Brauer group 
of a purely transcendental extension field in
one variable over an arbitrary field}
\author{Makoto Sakagaito}
\date{}
\renewcommand{\thefootnote}{}
\maketitle
\begin{center}
\textit{Tohoku University}
\end{center}\begin{abstract}
In this paper we show the Hasse principle for the Brauer group of a purely transcendental extension field in
one variable over an arbitrary field.
\end{abstract}

\section{Introduction}

\footnote{\textit{Key words and phrases}: Brauer group, \'{E}tale Cohomology, 
Hasse Principle}

For a field $k$, let $k_{s}$ be the separable closure of $k$ 
and $\bar{k}$ the algebraic closure of $k$.
Let $K$  be a global field (i.e., an algebraic number field or an
algebraic function field of transcendental degree one over a finite field), 
$S$  the set of all primes of $K$ and
$\widehat{K}_{\mathfrak p}$  
the completion of $K$ at $\mathfrak p\in S$. 
For a ring $A$, let  $\operatorname{Br}(A)$ be the Brauer group of $A$
(see \cite[p.141, IV, \S 2]{Me}).  
Then, the local-global map
\begin{equation*}
\operatorname{Br}(K)\to\displaystyle\prod_{\mathfrak p\in
 S}\operatorname{Br}(\widehat{K}_{\mathfrak p}) 
\end{equation*}
is injective (see \cite[Theorem 8.42 (2)]{K-K-S}).  
We call a statement of this form the 
Hasse principle. 
It is also known that the Hasse principle holds if $K$ is a purely transcendental
extension field in one variable over a perfect field $k$ (see \cite{YA}). We show that
it also holds without any assumption on  $k$. 
The following is our main theorem.

\vskip 5pt

Theorem \ref{Bha}. 
Let $k$ be an arbitrary field, $k(t)$ the purely transcendental
extension field in one variable $t$ over $k$ and $\widehat{k(t)}_{\mathfrak
 p}$ the quotient field of the completion of 
\begin{math}
\mathcal{O}_{\mathbb{P}_{k}^{1}, \mathfrak p}.
\end{math}
Then, the local-global map
\begin{equation*}
\operatorname{Br}(k(t))\to\displaystyle\prod_{\substack{\mathfrak
 p\in\mathbb{P}_{k}^{1 }\\ \operatorname{ht}(\mathfrak p)=1}}\operatorname{Br}(\widehat{k(t)}_{\mathfrak
 p})
\end{equation*}
is injective. 

Moreover, if $k$ is a separably closed field, the Hasse principle for
the Brauer group of any algebraic function fields in one variable over
$k$ is shown by using \cite[Corollaire (5.8)]{G} as in the case of
Theorem \ref{Bha}.

For the defference between the case of perfect fields and Theorem 
\ref{Bha}, see Remark \ref{remark:difference}.

\vskip 5pt

\section{Notations}
For a field $k$ and a Galois extension field $k^{\prime}$ of $k$,
$G(k^{\prime}/k)$ denotes the Galois group of $k^{\prime}/k$ and $k_{s}$
denotes the separable closure of $k$. 
We denote $G(k_{s}/k)$ by $G_{k}$ and the category of (discrete)
$G_{k}$-modules (cf, \cite[p.10, I]{S1}) by $G_{k}$-mod. 
For a discrete $G(k^{\prime}/k)$-module $A$ (but the action is continuous) and a positive integer $q$, 
$\operatorname{H}^{q}(k^{\prime}/k, A)$ denotes 
the $q$-th cohomology group of $G(k^{\prime}/k)$ with coefficients in $A$
(see \cite[p.10, I, \S 2]{S1}).  We put  $\operatorname{H}^{q}(k,
A)=\operatorname{H}^{q}(k_{s}/k, A)$. 
\begin{math}
\operatorname{Res}: \operatorname{H}^{p}(k, A)\to
\operatorname{H}^{p}(k^{\prime}, A)
\end{math}
denotes the restriction homomorphism.
For a group $G$, we put
$G_{q}=\{ g\in G~ |~ g^{q}=1\}$ and $X(G)$ the group of characters of $G$.

For a scheme $X$, 
$X^{(i)}$ is the set of points of codimension $i$ and
$X_{(i)}$ is the set of points of dimension $i$. 
We denote the \'{e}tale site (resp. finite \'{e}tale site) on $X$ by
$X_{et}$ (resp. $X_{fet}$) and
the category of sheaves over $X_{et}$ (resp. $X_{fet}$) by 
$\mathbb{S}_{X_{et}}$ (resp. $\mathbb{S}_{X_{fet}}$). For
$\cF\in\mathbb{S}_{X_{et}}$ (resp. $\mathbb{S}_{X_{fet}}$), 
we denote the $q$-th cohomology group of
$X_{et}$ ($X_{fet}$) with values in $\cF$ by
$\operatorname{H}^{q}_{et}(X, \cF)$ or even simply
$\operatorname{H}^{q}(X, \cF)$
(resp. $\operatorname{H}^{q}_{fet}(X, \cF)$). 
If $Y\subset X$ is a closed subscheme, we denote 
the $q$-th local (\'{e}tale) cohomology with support in $Y$ by 
$\operatorname{H}^{q}_Y(X, \cF)$.   
For an integral scheme $X$ and
$\mathfrak p\in X^{(1)}$, let $R(X)$ be the function field of $X$, 
$\mathcal{O}_{X, \mathfrak p}$ the local ring at 
$\mathfrak p$  of $X$, $\widehat{\mathcal{O}}_{X, \mathfrak p}$
the completion of $\mathcal{O}_{X, \mathfrak p}$, 
$\widehat{R(X)}_{\mathfrak p}$ its quotient field , 
$\widetilde{\mathcal{O}}_{X, \mathfrak p}$
the Henselization of $\mathcal{O}_{X, \mathfrak p}$ ,
$\widetilde{R(X)}_{\mathfrak p}$ its quotient field , $\mathcal{O}_{X,
\bar{\mathfrak p}}$ the
strictly Henselization of 
$\mathcal{O}_{X, \mathfrak p}$ and 
$R(X)_{\bar{\mathfrak p}}$ its
quotient field.

\section{Main theorem}

\begin{thm}\upshape\label{H-B}
Let $X$ be a 1-dimensional connected regular scheme, $K$ its quotient field. Then
\begin{equation}
\xymatrix{
0\ar[r]
&\operatorname{Br}(X)\ar[r]
&\operatorname{Br}(K)\ar[r]
&\displaystyle
\prod_{\mathfrak p\in X^{(1)}}
\operatorname{Br}(\widetilde{R(X)}_{\mathfrak p})/
\operatorname{Br}(\widetilde{\mathcal{O}}_{X, \mathfrak p})
}
\end{equation}
is exact.
\end{thm}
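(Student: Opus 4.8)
The plan is to realize the displayed sequence as a fragment of the localization long exact sequence for the sheaf $\mathbb{G}_m$ on the \'etale site of $X$, exploiting the fact that \'etale local cohomology is computed on Henselizations. Throughout I would identify $\operatorname{Br}(K)=\operatorname{H}^2(K,\mathbb{G}_m)$ (valid for any field) and $\operatorname{Br}(X)=\operatorname{H}^2(X,\mathbb{G}_m)$ (valid here since $X$ is regular of dimension one, so the Brauer group agrees with the torsion cohomological Brauer group, the latter being torsion because it injects into $\operatorname{Br}(K)$). Writing $\eta=\spec K$ for the generic point and passing to the limit over nonempty opens $U\subset X$ in the closed--open localization sequences, I use $\operatorname{H}^q(\eta,\mathbb{G}_m)=\varinjlim_U\operatorname{H}^q(U,\mathbb{G}_m)$ together with excision, which identifies each support term at $\mathfrak p$ with the local cohomology of the Henselian local ring. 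This yields an exact sequence
\begin{equation*}
\bigoplus_{\mathfrak p\in X^{(1)}}\operatorname{H}^2_{\mathfrak p}(\spec\widetilde{\mathcal{O}}_{X,\mathfrak p},\mathbb{G}_m)\to\operatorname{Br}(X)\to\operatorname{Br}(K)\xrightarrow{\ \partial\ }\bigoplus_{\mathfrak p\in X^{(1)}}\operatorname{H}^3_{\mathfrak p}(\spec\widetilde{\mathcal{O}}_{X,\mathfrak p},\mathbb{G}_m).
\end{equation*}

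Next I would analyze each local factor via the localization sequence of the Henselian trait $\spec\widetilde{\mathcal{O}}_{X,\mathfrak p}$, with closed point $\mathfrak p$ and generic point $\spec\widetilde{R(X)}_{\mathfrak p}$. Regularity gives $\operatorname{Pic}(\widetilde{\mathcal{O}}_{X,\mathfrak p})=0$ and the injectivity of $\operatorname{Br}(\widetilde{\mathcal{O}}_{X,\mathfrak p})\to\operatorname{Br}(\widetilde{R(X)}_{\mathfrak p})$, so that sequence reads
\begin{equation*}
0\to\operatorname{H}^2_{\mathfrak p}\to\operatorname{Br}(\widetilde{\mathcal{O}}_{X,\mathfrak p})\to\operatorname{Br}(\widetilde{R(X)}_{\mathfrak p})\xrightarrow{\ \delta\ }\operatorname{H}^3_{\mathfrak p}\to\cdots .
\end{equation*}
Hence $\operatorname{H}^2_{\mathfrak p}=0$, and $\delta$ induces an injection $\operatorname{Br}(\widetilde{R(X)}_{\mathfrak p})/\operatorname{Br}(\widetilde{\mathcal{O}}_{X,\mathfrak p})\hookrightarrow\operatorname{H}^3_{\mathfrak p}$ with kernel exactly $\operatorname{Br}(\widetilde{\mathcal{O}}_{X,\mathfrak p})$. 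The vanishing of every $\operatorname{H}^2_{\mathfrak p}$ already gives injectivity of $\operatorname{Br}(X)\to\operatorname{Br}(K)$ and exactness at $\operatorname{Br}(X)$.

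It then remains to match the global boundary $\partial$ with the local--global map of the theorem. By functoriality of the localization triangle under the flat morphism $\spec\widetilde{\mathcal{O}}_{X,\mathfrak p}\to X$, the $\mathfrak p$-component of $\partial$ factors as $\delta\circ\operatorname{Res}_{\mathfrak p}$, where $\operatorname{Res}_{\mathfrak p}\colon\operatorname{Br}(K)\to\operatorname{Br}(\widetilde{R(X)}_{\mathfrak p})$ is the restriction. Since $\ker\delta=\operatorname{Br}(\widetilde{\mathcal{O}}_{X,\mathfrak p})$, an element $\alpha\in\operatorname{Br}(K)$ satisfies $\partial_{\mathfrak p}(\alpha)=0$ if and only if its class in $\operatorname{Br}(\widetilde{R(X)}_{\mathfrak p})/\operatorname{Br}(\widetilde{\mathcal{O}}_{X,\mathfrak p})$ vanishes. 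As ``all components vanish'' is the same condition whether the target is written as a direct sum or a product, $\ker\partial$ coincides with the kernel of the local--global map, and exactness at $\operatorname{Br}(K)$ follows from exactness of the localization sequence at $\operatorname{Br}(K)$.

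The step I expect to be the main obstacle is the careful justification of the global localization sequence itself: the commutation of \'etale cohomology with the inverse limit defining the generic point, together with the excision identification of the support terms with local cohomology of the \emph{Henselizations}, and the compatibility of the global and local boundary maps. This compatibility is precisely what forces Henselizations (rather than completions) to appear. The identification $\operatorname{Br}(X)=\operatorname{H}^2(X,\mathbb{G}_m)$ for an arbitrary regular one-dimensional $X$ is a secondary point to be addressed. Notably, working directly with $\mathbb{G}_m$ instead of $\mu_n$ avoids any splitting into prime-to-$p$ and $p$-primary parts, which is what should allow the residue fields $\kappa(\mathfrak p)$ to be arbitrary, and in particular imperfect, without extra hypotheses.
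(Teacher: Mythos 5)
Your proof is correct, and it rests on the same two pillars as the paper's argument --- the localization (local cohomology) exact sequence for $\mathbb{G}_m$ and the excision isomorphism identifying the support term at $\mathfrak p$ with local cohomology of the Henselization --- but you globalize differently. The paper works one point at a time: for each $\mathfrak p$ it applies the sequence $\operatorname{H}^{p}(Y,\mathbb{G}_m)\to\operatorname{H}^{p}(\spec L,\mathbb{G}_m)\to\operatorname{H}^{p+1}_{Z}(Y,\mathbb{G}_m)$ to $Y=\spec(\mathcal{O}_{X,\mathfrak p})$, uses excision and a commutative diagram to show that $\operatorname{Br}(K)/\operatorname{Br}(\mathcal{O}_{X,\mathfrak p})$ injects into $\operatorname{Br}(\widetilde{R(X)}_{\mathfrak p})/\operatorname{Br}(\widetilde{\mathcal{O}}_{X,\mathfrak p})$, and then quotes Grothendieck's intersection formula $\operatorname{Br}(X)=\bigcap_{\mathfrak p\in X^{(1)}}\operatorname{Br}(\mathcal{O}_{X,\mathfrak p})$ (Brauer II, Proposition 2.3) to convert this punctual statement into the exact sequence. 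You instead assemble a global coniveau-type sequence by passing to the colimit of localization sequences over open subsets, which lets you read off exactness at $\operatorname{Br}(X)$ and at $\operatorname{Br}(K)$ simultaneously without the intersection formula. The price of your route is exactly the two verifications you flag yourself: the identification $\operatorname{Br}(X)=\operatorname{H}^2(X,\mathbb{G}_m)$ (available from Brauer II, Corollaire 2.2 for regular schemes of dimension $\le 1$, once your sequence shows $\operatorname{H}^2(X,\mathbb{G}_m)$ injects into the torsion group $\operatorname{Br}(K)$) and the commutation of \'etale cohomology with the limit defining the generic point, which requires $X$ to be Noetherian or at least quasi-compact --- an implicit hypothesis of the theorem in any case. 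The paper sidesteps both issues by staying with honest Brauer groups of local rings; the real work, namely the injectivity of $\operatorname{Br}(\widetilde{\mathcal{O}}_{X,\mathfrak p})\to\operatorname{Br}(\widetilde{R(X)}_{\mathfrak p})$ and the resulting vanishing of $\operatorname{H}^2_{\mathfrak p}$, is common to both proofs.
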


\begin{proof}
Suppose that $B$ is a discrete valuation ring, $L$ is its quotient
 field, $Y=\spec{B}$ and $Z=Y\setminus\spec{L}=\{\mathfrak p\}$. Then we have the exact sequence
\begin{equation}
\operatorname{H}^{p}(Y, \mathbb{G}_{m})\to 
\operatorname{H}^{p}(\spec{L}, \mathbb{G}_{m})\to
\operatorname{H}^{p+1}_{Z}(Y, \mathbb{G}_{m})
\end{equation}
by \cite[p.92, III, Proposition 1.25]{Me} and 
\begin{math}
\operatorname{H}^{2}(Y, \mathbb{G}_{m})\to \operatorname{H}^{2}(\spec L, \mathbb{G}_{m})
\end{math}
is injective by \cite[p.145, IV, \S2]{Me}. Moreover we have
\begin{equation}\label{sec}
\operatorname{H}^{p}_{Z}(Y,
 \mathbb{G}_{m})\simeq\operatorname{H}^{p}_{\{\mathfrak p\}}
(\spec(\widetilde{\mathcal{O}}_{Y,
 \mathfrak p}), \mathbb{G}_{m})
\end{equation}
by \cite[p.93, III, Corollary 1.28]{Me}. Moreover, the diagram
\begin{equation*}
\xymatrix{
\operatorname{Br}(K)/
\operatorname{Br}(\mathcal{O}_{X, \mathfrak p})\ar[r]\ar[d]
&\operatorname{Br}(\widetilde{R(X)}_{\mathfrak p})/
\operatorname{Br}(\widetilde{\mathcal{O}}_{\mathfrak p})\ar[d] \\
\operatorname{H}^{3}_{\{\mathfrak p\}}
(\spec(\mathcal{O}_{X, \mathfrak p}), \mathbb{G}_{m})
\ar[r]_{cf, (\ref{sec})}^{\simeq}
&\operatorname{H}^{3}_{\{\mathfrak p\}}
(\spec(\widetilde{\mathcal{O}}_{X, \mathfrak p}), \mathbb{G}_{m})
}
\end{equation*}
is commutative. Therefore 
\begin{equation*}
\operatorname{Br}(K)/\operatorname{Br}(\mathcal{O}_{X, \mathfrak p})\to
\operatorname{Br}(\widetilde{R(X)}_{\mathfrak p})/\operatorname{Br}
(\widetilde{\mathcal{O}}_{X, \mathfrak p})
\end{equation*}
is injective. So the statement follows from \cite[p.77, II, Proposition 2.3]{G}.
\end{proof}
\begin{lem}\upshape\label{com}
Let $A$ be a Henselian discrete valuation ring, $K$ its quotient field ,
 $k$ its residue field and $K_{nr}$ its maximal unramified extension. Then
\begin{equation*}
\operatorname{H}^p(\spec(A),\mathit{g}_{*}(\mathbb{G}_{m}))
=\operatorname{H}^p(K_{nr}/K, (K_{nr})^{*})
\end{equation*}
for any $p>0$ and the sequence 
\begin{equation}\label{hen}
0\to \operatorname{H}^{p}(\spec(A), \mathbb{G}_{m})\to 
\operatorname{H}^p(K_{nr}/K, (K_{nr})^{*})\to
\operatorname{H}^{p}(k, \mathbb{Z})\to 0
\end{equation}
is exact.
\end{lem}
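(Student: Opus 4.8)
The plan is to reduce both assertions to Galois cohomology over $G_{k}$ by means of the strict Henselization. Throughout, let $g\colon\spec(K)\hookrightarrow\spec(A)$ be the inclusion of the generic point and $i\colon\spec(k)\hookrightarrow\spec(A)$ the closed point, and write $A^{sh}$ for the strict Henselization of $A$, a strictly Henselian discrete valuation ring with residue field $k_{s}$ and quotient field $K_{nr}$. Since $A$ is Henselian, finite \'{e}tale covers of $\spec(A)$ correspond bijectively to finite separable extensions of $k$, so $G(K_{nr}/K)\simeq G_{k}$ and $\spec(A^{sh})=\varprojlim\spec(A_{i})$ is a pro-Galois cover of $\spec(A)$ with group $G_{k}$. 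I would exploit the Hochschild--Serre spectral sequence
\begin{equation*}
\operatorname{H}^{p}(G_{k}, \operatorname{H}^{q}(\spec(A^{sh}), \cF))\Rightarrow\operatorname{H}^{p+q}(\spec(A), \cF)
\end{equation*}
attached to this cover, taking $\cF$ to be each of $\mathbb{G}_{m}$, $g_{*}(\mathbb{G}_{m})$ and $i_{*}(\mathbb{Z})$.

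First I would record the vanishing $\operatorname{H}^{q}(\spec(A^{sh}), \cF)=0$ for $q>0$ in each case. For $\cF=\mathbb{G}_{m}$ this is the vanishing of $\operatorname{Pic}$, of $\operatorname{Br}$, and of the higher cohomology of $\mathbb{G}_{m}$ over a strictly Henselian local ring. For $\cF=i_{*}(\mathbb{Z})$ one has $\operatorname{H}^{q}(\spec(A^{sh}), i_{*}\mathbb{Z})=\operatorname{H}^{q}(k_{s}, \mathbb{Z})=0$ for $q>0$ because $k_{s}$ is separably closed. For $\cF=g_{*}(\mathbb{G}_{m})$, base change along the pro-\'{e}tale morphism $\spec(A^{sh})\to\spec(A)$ identifies its restriction with $g'_{*}(\mathbb{G}_{m})$ for the generic point $g'\colon\spec(K_{nr})\to\spec(A^{sh})$, and the divisor exact sequence
\begin{equation*}
0\to\mathbb{G}_{m}\to g'_{*}(\mathbb{G}_{m})\to i'_{*}(\mathbb{Z})\to 0
\end{equation*}
over $\spec(A^{sh})$ squeezes $\operatorname{H}^{q}(\spec(A^{sh}), g'_{*}\mathbb{G}_{m})$ between the two vanishing groups just treated. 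Consequently the spectral sequence degenerates to $\operatorname{H}^{p}(\spec(A), \cF)=\operatorname{H}^{p}(G_{k}, \cF_{\bar{s}})$, where $\cF_{\bar{s}}$ is the stalk at the closed geometric point. As $(g_{*}\mathbb{G}_{m})_{\bar{s}}=\Gamma(\spec(K_{nr}), \mathbb{G}_{m})=(K_{nr})^{*}$, this gives
\begin{equation*}
\operatorname{H}^{p}(\spec(A), g_{*}(\mathbb{G}_{m}))=\operatorname{H}^{p}(G_{k}, (K_{nr})^{*})=\operatorname{H}^{p}(K_{nr}/K, (K_{nr})^{*}),
\end{equation*}
which is the first assertion.

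For the exact sequence (\ref{hen}) I would take the long exact cohomology sequence over $\spec(A)$ of the divisor sequence $0\to\mathbb{G}_{m}\to g_{*}(\mathbb{G}_{m})\to i_{*}(\mathbb{Z})\to 0$. Applying the identification above to all three sheaves, whose stalks at $\bar{s}$ are $(A^{sh})^{*}$, $(K_{nr})^{*}$ and $\mathbb{Z}$, this long exact sequence is canonically the long exact $G_{k}$-cohomology sequence of the short exact sequence of $G_{k}$-modules
\begin{equation*}
0\to (A^{sh})^{*}\to (K_{nr})^{*}\to\mathbb{Z}\to 0,
\end{equation*}
the last arrow being the normalized valuation $v$. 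A uniformizer $\pi\in A$ is fixed by $G_{k}$ and satisfies $v(\pi)=1$, so $1\mapsto\pi$ is a $G_{k}$-equivariant splitting of $v$; hence this sequence of $G_{k}$-modules splits, every connecting homomorphism vanishes, and the long exact sequence breaks into
\begin{equation*}
0\to\operatorname{H}^{p}(\spec(A), \mathbb{G}_{m})\to\operatorname{H}^{p}(K_{nr}/K, (K_{nr})^{*})\to\operatorname{H}^{p}(k, \mathbb{Z})\to 0,
\end{equation*}
where $\operatorname{H}^{p}(\spec(A), i_{*}\mathbb{Z})=\operatorname{H}^{p}(k, \mathbb{Z})$. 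This is precisely (\ref{hen}).

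The main obstacle I anticipate is the vanishing $\operatorname{H}^{q}(\spec(A^{sh}), g_{*}\mathbb{G}_{m})=0$ for $q>0$: since $g_{*}\mathbb{G}_{m}$ is not a torsion sheaf, the usual torsion vanishing theorems do not apply directly, and one must argue through the base-change identity $(g_{*}\mathbb{G}_{m})|_{A^{sh}}=g'_{*}\mathbb{G}_{m}$, the divisor sequence, and the triviality of $G_{k_{s}}$. Once this vanishing and the Hochschild--Serre identification $\operatorname{H}^{p}(\spec(A), \cF)=\operatorname{H}^{p}(G_{k}, \cF_{\bar{s}})$ are secured, the remaining steps are formal.
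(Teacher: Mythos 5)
Your argument is correct, but it reaches the comparison $\operatorname{H}^{p}(\spec(A),\cF)\simeq\operatorname{H}^{p}(G_{k},\cF_{\bar{s}})$ by a different mechanism than the paper. The paper never invokes the strict Henselization or a spectral sequence: it passes from the \'{e}tale site to the finite \'{e}tale site of $\spec(A)$, checks that the relevant direct image functors ($f_{*}$, $f'_{*}$, $i_{*}$) are exact using the fact that every connected separated \'{e}tale cover of a Henselian discrete valuation ring sits inside a finite cover by another Henselian discrete valuation ring, and then identifies $(f'\circ f)_{*}g_{*}(\mathbb{G}_{m})$ with the push-forward from $\spec(k)$ of the $G_{k}$-module $(K_{nr})^{*}$ via the functor $M\mapsto M^{\operatorname{Gal}(K_{s}/K_{nr})}$. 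You instead run Hochschild--Serre for the pro-Galois cover $\spec(A^{sh})\to\spec(A)$ and kill the $q>0$ rows; that is an equally standard route, and your anticipated ``main obstacle'' is actually a non-issue, since over a strictly Henselian local ring the global sections functor is the stalk functor at the closed point and is therefore exact, so $\operatorname{H}^{q}(\spec(A^{sh}),\cF)=0$ for $q>0$ and \emph{every} abelian sheaf $\cF$, torsion or not (your detour through the divisor sequence is harmless but unnecessary). Where your write-up genuinely adds value is the second assertion: the paper derives (\ref{hen}) from the sheaf sequence $0\to\mathbb{G}_{m,A}\to g_{*}(\mathbb{G}_{m,K})\to i_{*}(\mathbb{Z})\to 0$ without explaining why the long exact sequence splits into short exact sequences, whereas your observation that a uniformizer of $A$ gives a $G_{k}$-equivariant splitting of $v\colon(K_{nr})^{*}\to\mathbb{Z}$, hence kills all connecting maps, supplies exactly the missing justification.
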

\begin{proof}
Let 
%$k$ be the residue field of $A$ and 
$\mathit{i}$: $\spec(k)\to \spec(A)$ be the
natural map. Then, $\mathit{i}_{*}$ is 
exact. Let $(set)$ be the class of all separated etale morphisms and $f$:
$X_{et}\to X_{set}$ the continuous morphism which is induced by identity map on $X$. 
Then $f_{*}$ is exact by \cite[p.112, (b) of Examples 3.4]{Me}. Let
 $(fet)$ be the class of all finite
 etale morphisms and $f^{\prime}$: $X_{set}\to X_{fet}$  the
 continuous morphism
 which is induced by identity map on $X$. 

Let $Y\to X$ be a separated
 etale morphism with $Y$ connected, $R(Y)$ the ring of rational functions of
 $Y$, $A \to B$ the normalization of $A$ in $R(Y)$ and $X^{\prime}=\spec(B)$. Then
 $R(Y)/K$ is a finite separable extension and $Y$ is an open subscheme of
 $X^{\prime}$ by \cite[p.29, I, Theorem 3.20]{Me}. Moreover
 $X^{\prime}\to X$ is finite by \cite[p.4, I, Proposition
 1.1]{Me}. Then, since $A$ is a Henselian discrete valuation ring,
 $B$ is a Henselian discrete valuation ring by \cite[p.33, I, (b) of Theorem 4.2]{Me} and
 \cite[p.34, I, Corollary 4.3]{Me}. Also $R(X^{\prime})/R(X)$ is
 an unramfied extension. Therefore $f^{\prime}_{*}$ is exact by \cite[p.111,
 III, Proposition 3.3]{Me}. So $f^{\prime}_{*}\circ f_{*}$ is exact and
\begin{equation*}
\operatorname{H}^{p}_{fet}(X, (f^{\prime}\circ f)_{*}(\cF))
\simeq
\operatorname{H}^{p}_{et}(X, \cF)
\end{equation*}
for any $\cF\in \mathbb{S}_{X_{et}}$. 

We have the isomorphism 
\begin{math}
G_{K}\text{-}\mathbf{mod}\simeq \mathbb{S}_{\spec(K)_{et}}
\end{math}
by \cite[p.53, II.\S1,Theorem1.9]{Me}. Let the functor $N$ be defined as
\begin{equation*}
(G_{K}\text{-}\mathbf{mod})\ni M\longmapsto
 M^{\operatorname{Gal}(K_{s}/K_{nr})}\in (G_{k}\text{-}\mathbf{mod})
\end{equation*}
and 
\begin{math}
N^{\prime}: \mathbb{S}_{\spec(K)_{et}}\to \mathbb{S}_{\spec(k)_{et}}
\end{math}
the functor which corresponds to $N$. Let 
\begin{math}
Y^{\prime\prime}\in X_{fet}
\end{math}
be connected. 
Moreover, let $K^{\prime\prime}=R(Y^{\prime\prime})$ and $k^{\prime\prime}$ the finite
 extension field of $k$ which corresponds to the closed point of
 $Y^{\prime\prime}$. Then 
\begin{equation*}
N^{\prime}(F)(\spec(k^{\prime\prime}))=F(\spec(K^{\prime\prime}))
\end{equation*}
for $F\in \mathbb{S}_{\spec(K)_{et}}$ because
\begin{equation*}
G(K_{nr}/K^{\prime\prime})\simeq G_{k^{\prime\prime}},
~~G(K_{nr}/K^{\prime\prime})\simeq 
G_{K^{\prime\prime}}/G_{K_{nr}}.
\end{equation*}
Therefore the diagram
 %by Lemma \ref{gror} $(2)$ and the fact that
 %
\begin{equation*}
\label{g-l}
\xymatrix{G_{K}\text{-}\mathbf{mod}\ar[d]_{N}\ar@{}[r]|{\simeq}
& \mathbb{S}_{\spec(K)_{et}}\ar[r]^{f^{\prime}_{*}\circ 
f_{*}\circ\mathit{g}_{*}}\ar[d]^{N^{\prime}}
& \mathbb{S}_{X_{fet}}\\
G_{k}\text{-}\mathbf{mod}\ar@{}[r]|{\simeq}
& \mathbb{S}_{\spec(k)_{et}}\ar[ur]_{f^{\prime}_{*}\circ
f_{*}\circ\mathit{i}_{*}}
&.
}
\end{equation*}
is commutative.
So 
\begin{align*}
\operatorname{H}^p_{et}(X,\mathit{g}_{*}(\mathbb{G}_{m}))
&=\operatorname{H}^p_{fet}(X,f^{'}\circ f\circ\mathit{g}_{*}(\mathbb{G}_{m}))\\
&=\operatorname{H}^p_{fet}(X,f^{'}\circ f\circ
\mathit{i}_{*}(N^{\prime}(\mathbb{G}_{m}))) \\
&=\operatorname{H}^p_{et}(X,\mathit{i}_{*}(N^{\prime}(\mathbb{G}_{m}))) \\
&=\operatorname{H}^p_{et}(\spec(k), N^{\prime}(\mathbb{G}_{m}))\\
&=\operatorname{H}^p( k, (K_{nr})^{*})
=\operatorname{H}^p( K_{nr}/K, (K_{nr})^{*}).
\end{align*}
If we want to show where we consider the sheaf $\mathbb{G}_{m}$, we use
the notation such as $\mathbb{G}_{m, A}$. 
Then the exact sequence (\ref{hen}) follows from the exact sequence of sheaves
\begin{equation*}
0\to \mathbb{G}_{m, A}\to 
g_{*}(\mathbb{G}_{m, K})\to
i_{*}(\mathbb{Z})\to
0
\end{equation*}
(cf, \cite[p.106, III, Example 2.22]{Me}). So the proof is complete.
%see Lemma \ref{com}.
\end{proof}

\begin{cor}\upshape\label{GB}
Consider the situation of Theorem \ref{H-B} and
\begin{equation*}
\operatorname{Br}_{un}(X)
=\operatorname{Ker}\left(\operatorname{Br}(K)\stackrel{\operatorname{Res}}{\to}
\displaystyle\prod_{\mathfrak p\in X_{(0)}}
\operatorname{Br}(\widetilde{R(X)}_{\bar{\mathfrak p}})\right).
\end{equation*}
Then the sequence
\begin{equation}\label{Fad2}
0\to\operatorname{Br}(X)\to
\operatorname{Br}_{un}(X)\to
\displaystyle\prod_{\substack{\mathfrak p\in X^{(1)}}}
\operatorname{X}(G_{\kappa(\mathfrak p)})
\end{equation}
is exact.
\end{cor}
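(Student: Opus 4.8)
The plan is to obtain (\ref{Fad2}) by splicing Theorem \ref{H-B} with the case $p=2$ of Lemma \ref{com}. Note first that $X_{(0)}=X^{(1)}$ since $X$ is $1$-dimensional, so the two index sets agree. Fix $\mathfrak p\in X^{(1)}$ and apply Lemma \ref{com} to the Henselian discrete valuation ring $A=\widetilde{\mathcal O}_{X,\mathfrak p}$, whose quotient field is $\widetilde{R(X)}_{\mathfrak p}$, whose residue field is $\kappa(\mathfrak p)$, and whose maximal unramified extension is the quotient field $\widetilde{R(X)}_{\bar{\mathfrak p}}$ of the strict Henselization $\mathcal O_{X,\bar{\mathfrak p}}$. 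Using $\operatorname{H}^{2}(\spec(A),\mathbb G_{m})=\operatorname{Br}(A)$, the exact sequence (\ref{hen}) at $p=2$ reads
\[
0\to\operatorname{Br}(\widetilde{\mathcal O}_{X,\mathfrak p})\to \operatorname{H}^{2}\bigl(\widetilde{R(X)}_{\bar{\mathfrak p}}/\widetilde{R(X)}_{\mathfrak p},\,(\widetilde{R(X)}_{\bar{\mathfrak p}})^{*}\bigr)\to \operatorname{H}^{2}(\kappa(\mathfrak p),\mathbb Z)\to 0 .
\]
First I would record, from the long exact sequence attached to $0\to\mathbb Z\to\mathbb Q\to\mathbb Q/\mathbb Z\to 0$ and the vanishing $\operatorname{H}^{i}(\kappa(\mathfrak p),\mathbb Q)=0$ for $i>0$, the identification $\operatorname{H}^{2}(\kappa(\mathfrak p),\mathbb Z)\simeq\operatorname{H}^{1}(\kappa(\mathfrak p),\mathbb Q/\mathbb Z)=X(G_{\kappa(\mathfrak p)})$.

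Next I would identify the middle term with the ``unramified kernel'' that appears in the definition of $\operatorname{Br}_{un}(X)$. Applying the Hochschild--Serre spectral sequence to the extension $\widetilde{R(X)}_{\bar{\mathfrak p}}/\widetilde{R(X)}_{\mathfrak p}$, and using Hilbert's Theorem $90$ to kill $\operatorname{H}^{1}$ over $\widetilde{R(X)}_{\bar{\mathfrak p}}$, gives exactness of $0\to\operatorname{H}^{2}(\widetilde{R(X)}_{\bar{\mathfrak p}}/\widetilde{R(X)}_{\mathfrak p},(\cdot)^{*})\to\operatorname{Br}(\widetilde{R(X)}_{\mathfrak p})\to\operatorname{Br}(\widetilde{R(X)}_{\bar{\mathfrak p}})$, whence
\[
\operatorname{H}^{2}\bigl(\widetilde{R(X)}_{\bar{\mathfrak p}}/\widetilde{R(X)}_{\mathfrak p},\,(\widetilde{R(X)}_{\bar{\mathfrak p}})^{*}\bigr)\;\simeq\;\operatorname{Ker}\bigl(\operatorname{Br}(\widetilde{R(X)}_{\mathfrak p})\to\operatorname{Br}(\widetilde{R(X)}_{\bar{\mathfrak p}})\bigr).
\]
I would then define the arrow $\operatorname{Br}_{un}(X)\to\prod_{\mathfrak p}X(G_{\kappa(\mathfrak p)})$ componentwise: for $\alpha\in\operatorname{Br}_{un}(X)\subseteq\operatorname{Br}(K)$ the restriction $\operatorname{Res}_{\mathfrak p}(\alpha)\in\operatorname{Br}(\widetilde{R(X)}_{\mathfrak p})$ dies in $\operatorname{Br}(\widetilde{R(X)}_{\bar{\mathfrak p}})$ by the very definition of $\operatorname{Br}_{un}(X)$ (restriction to the strictly Henselian local field factors through $\widetilde{R(X)}_{\mathfrak p}$), so it lies in the above kernel; composing with the residue $\partial_{\mathfrak p}$ from the displayed local sequence produces the $\mathfrak p$-component.

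The exactness is then a diagram chase. At $\operatorname{Br}(X)$: Theorem \ref{H-B} gives $\operatorname{Br}(X)\hookrightarrow\operatorname{Br}(K)$, and for $\alpha\in\operatorname{Br}(X)$ each $\operatorname{Res}_{\mathfrak p}(\alpha)$ lies in $\operatorname{Br}(\widetilde{\mathcal O}_{X,\mathfrak p})$, which maps to $0$ in $\operatorname{Br}(\widetilde{R(X)}_{\bar{\mathfrak p}})$; hence $\operatorname{Br}(X)\subseteq\operatorname{Br}_{un}(X)$ and the first map is injective. At $\operatorname{Br}_{un}(X)$: by exactness of the displayed local sequence, $\partial_{\mathfrak p}(\operatorname{Res}_{\mathfrak p}\alpha)=0$ is equivalent to $\operatorname{Res}_{\mathfrak p}(\alpha)\in\operatorname{Br}(\widetilde{\mathcal O}_{X,\mathfrak p})$, i.e.\ to $\alpha$ having trivial image in $\operatorname{Br}(\widetilde{R(X)}_{\mathfrak p})/\operatorname{Br}(\widetilde{\mathcal O}_{X,\mathfrak p})$; imposing this for every $\mathfrak p$ characterizes $\operatorname{Br}(X)$ inside $\operatorname{Br}(K)$ by Theorem \ref{H-B}. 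Thus the kernel of the second map is exactly the image $\operatorname{Br}(X)$.

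The hard part will be the bookkeeping of compatibilities rather than any single computation: one must check that the residue $\partial_{\mathfrak p}$ supplied by (\ref{hen}) is compatible with the boundary read off from the $\mathbb G_{m}$-localization sequence used in Theorem \ref{H-B}, so that the single condition ``$\operatorname{Res}_{\mathfrak p}(\alpha)$ is unramified at $\mathfrak p$'' simultaneously governs membership in $\operatorname{Br}(X)$ (via Theorem \ref{H-B}) and vanishing of the $\mathfrak p$-component of $\partial$ (via Lemma \ref{com}). Pinning down the identification of $\widetilde{R(X)}_{\bar{\mathfrak p}}$ with the maximal unramified extension of $\widetilde{R(X)}_{\mathfrak p}$ — so that $\operatorname{Br}_{un}(X)$ genuinely detects the Galois-cohomology term $\operatorname{H}^{2}(K_{nr}/K,(K_{nr})^{*})$ of Lemma \ref{com} — is the other point that must be made precise; once these are in place the argument is formal.
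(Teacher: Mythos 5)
Your argument is correct and is essentially the paper's own proof: both splice Theorem \ref{H-B} with the $p=2$ case of Lemma \ref{com} together with the identification $\operatorname{H}^{2}(\kappa(\mathfrak p),\mathbb{Z})\simeq X(G_{\kappa(\mathfrak p)})$, the only cosmetic difference being that the paper applies Theorem \ref{H-B} to each $\spec(\mathcal{O}_{X,\mathfrak p})$ and then globalizes via Grothendieck's Proposition 2.3, whereas you chase the global sequence of Theorem \ref{H-B} directly. Your inflation--restriction identification of $\operatorname{H}^{2}(K_{nr}/K,(K_{nr})^{*})$ with $\operatorname{Ker}\bigl(\operatorname{Br}(\widetilde{R(X)}_{\mathfrak p})\to\operatorname{Br}(\widetilde{R(X)}_{\bar{\mathfrak p}})\bigr)$ is in fact a more careful reading than the paper's bald assertion that $\operatorname{Br}(\widetilde{R(X)}_{\mathfrak p})/\operatorname{Br}(\widetilde{\mathcal{O}}_{X,\mathfrak p})\simeq X(G_{\kappa(\mathfrak p)})$, which for imperfect residue fields is only valid on the unramified part that actually occurs here.
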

\begin{proof}
%Suppose that $X=\spec A$ where a discrete valuation ring $A$. Then 
%
It follow from \cite[p.76, II, Corollaire 2.2]{G} and \cite[p.147, IV,
 Proposition 2.11 (b)]{Me} that
\begin{math}
\operatorname{Br}(\mathcal{O}_{X, \mathfrak p})\subset
\operatorname{Br}_{un}(\spec(\mathcal{O}_{X, \mathfrak p})).
\end{math}
So the sequence
\begin{equation*}
0\to \operatorname{Br}(\mathcal{O}_{X, \mathfrak p})
\to \operatorname{Br}_{un}(\spec(\mathcal{O}_{X, \mathfrak p}))
\to \operatorname{Br}(\widetilde{R(X)}_{\mathfrak p})/
\operatorname{Br}(\widetilde{\mathcal{O}}_{X, \mathfrak p})
\end{equation*}
is exact by Theorem \ref{H-B}. Moreover, 
\begin{math}
 \operatorname{Br}(\widetilde{R(X)}_{\mathfrak p})/
\operatorname{Br}(\widetilde{\mathcal{O}}_{X, \mathfrak p})
\simeq X(G_{\kappa(\mathfrak p)})
\end{math}
by Lemma \ref{com}. Therefore the sequence 
\begin{equation}\label{A}
0\to \operatorname{Br}(\mathcal{O}_{X, \mathfrak p})
\to \operatorname{Br}_{un}(\spec(\mathcal{O}_{X, \mathfrak p}))
\to X(G_{\kappa(\mathfrak p)})
\end{equation}
is exact. So the
 statement follows from (\ref{A}) and \cite[p.77, II, Proposition 2.3]{G}. 
\end{proof}
\begin{rem}\upshape
\begin{enumerate}
\item Suppose that X is a regular algebraic curve over a field $k$. If $k$ is
 perfect,
\begin{math}
\operatorname{Br}_{un}(X)=\operatorname{Br}(K)
\end{math}
by \cite[p.80, II, 3.3]{S1}. If $(n, \operatorname{ch}(k))=1$,
\begin{math}
\operatorname{Br}_{un}(X)_{n}=\operatorname{Br}(K)_{n}
\end{math}
by \cite[p.111, Appendix, \S2, (2.2)]{S1}.
\item Corollary \ref{GB} is true even if $\operatorname{dim}X\neq 1$ 
because 
\begin{equation*}
\operatorname{H}^2(X, \mathit{g}_*(\mathbb{G}_{m,
 K}))=\operatorname{Ker}\left(\operatorname{Br}(K)\stackrel{\operatorname{Res}}{\to}\displaystyle\prod_{x\in X_{(0)}}\operatorname{Br}(K_{\bar{x}})\right)
\end{equation*}
where $\mathit{g}$ :
\begin{math}
\spec{K}\to X
\end{math}
is the generic point of $X$.
\end{enumerate}
\end{rem}

\begin{thm}\label{Bha}\upshape
Let $k$ be an arbitrary field $k$ and $k(x)$ the purely transcendental
extension field in one variable $x$ over $k$. Then, the local-global map
\begin{equation*}
\operatorname{Br}(k(x))\to
\displaystyle
\prod_{\mathfrak
 p\in\mathbb{P}_{k}^{1 (1)}}\operatorname{Br}(\widehat{k(x)}_{\mathfrak
 p})
\end{equation*}
is injective. 
\end{thm}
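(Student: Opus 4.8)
The plan is to apply Theorem \ref{H-B} to the scheme $X=\mathbb{P}^1_k$, which is $1$-dimensional, connected and regular, has function field $k(x)$, and whose height-one points $\mathfrak p$ are exactly those indexing the product. The strategy is to first bound the kernel of the completion map by $\operatorname{Br}(\mathbb{P}^1_k)$ (importing Theorem \ref{H-B}, which is phrased via Henselizations), and then to kill $\operatorname{Br}(\mathbb{P}^1_k)$ using a single rational point.

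First I would record the comparison between Henselization and completion at each $\mathfrak p$. The completion $\widehat{\mathcal O}_{\mathbb{P}^1_k,\mathfrak p}$ is a complete, hence Henselian, discrete valuation ring with the same residue field $\kappa(\mathfrak p)$ as $\widetilde{\mathcal O}_{\mathbb{P}^1_k,\mathfrak p}$. Therefore Lemma \ref{com} applies verbatim to $\widehat{\mathcal O}_{\mathbb{P}^1_k,\mathfrak p}$ and gives $\operatorname{Br}(\widehat{k(x)}_{\mathfrak p})/\operatorname{Br}(\widehat{\mathcal O}_{\mathbb{P}^1_k,\mathfrak p})\simeq \operatorname{X}(G_{\kappa(\mathfrak p)})$, the same character group governing the Henselian quotient in Theorem \ref{H-B}. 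Moreover the residue map $\operatorname{Br}(k(x))\to\operatorname{Br}(\widehat{k(x)}_{\mathfrak p})\to \operatorname{X}(G_{\kappa(\mathfrak p)})$ agrees with its Henselian analogue $\operatorname{Br}(k(x))\to\operatorname{Br}(\widetilde{k(x)}_{\mathfrak p})/\operatorname{Br}(\widetilde{\mathcal O}_{\mathbb{P}^1_k,\mathfrak p})\simeq \operatorname{X}(G_{\kappa(\mathfrak p)})$, since both are induced by passage to the common residue field $\kappa(\mathfrak p)$ and the natural map $\operatorname{Br}(\widetilde{k(x)}_{\mathfrak p})\to\operatorname{Br}(\widehat{k(x)}_{\mathfrak p})$ carries one onto the other. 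Granting this, the reduction is immediate: if $\alpha\in\operatorname{Br}(k(x))$ dies in every $\operatorname{Br}(\widehat{k(x)}_{\mathfrak p})$, it a fortiori dies in every quotient $\operatorname{X}(G_{\kappa(\mathfrak p)})$, hence lies in the kernel of $\operatorname{Br}(k(x))\to\prod_{\mathfrak p}\operatorname{Br}(\widetilde{k(x)}_{\mathfrak p})/\operatorname{Br}(\widetilde{\mathcal O}_{\mathbb{P}^1_k,\mathfrak p})$, which by Theorem \ref{H-B} is exactly $\operatorname{Br}(\mathbb{P}^1_k)$. Thus the kernel of the local-global map is contained in $\operatorname{Br}(\mathbb{P}^1_k)\subseteq\operatorname{Br}(k(x))$.

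It then remains to prove that $\operatorname{Br}(\mathbb{P}^1_k)$ injects into $\operatorname{Br}(\widehat{k(x)}_{\mathfrak p})$ for a single well-chosen $\mathfrak p$. Here I would exploit that $\mathbb{P}^1_k$ carries a $k$-rational point $\mathfrak p_0$ (say $x=0$) for every field $k$, at which $\widehat{\mathcal O}_{\mathbb{P}^1_k,\mathfrak p_0}=k[[x]]$ and $\widehat{k(x)}_{\mathfrak p_0}=k((x))$. Using the standard computation $\operatorname{Br}(\mathbb{P}^1_k)\cong\operatorname{Br}(k)$ via the structure morphism $\pi\colon\mathbb{P}^1_k\to\operatorname{Spec}k$, write $\alpha=\pi^{*}\beta$ with $\beta\in\operatorname{Br}(k)$. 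Restriction to $\mathfrak p_0$ followed by the isomorphism $\operatorname{Br}(k[[x]])\cong\operatorname{Br}(k)$ (a Henselian local ring has the Brauer group of its residue field) sends $\pi^{*}\beta\mapsto\beta$, since restricting a pullback along a section through $\mathfrak p_0$ recovers $\beta$. Composing with the injection $\operatorname{Br}(k[[x]])\hookrightarrow\operatorname{Br}(k((x)))$ furnished by the exact sequence of Theorem \ref{H-B} applied to $\operatorname{Spec}(k[[x]])$, we conclude that $\alpha\mapsto0$ in $\operatorname{Br}(k((x)))$ forces $\beta=0$, whence $\alpha=0$. Therefore the kernel is trivial.

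I expect the main obstacle to be twofold: on the one hand the careful bookkeeping that lets Theorem \ref{H-B} (stated with Henselizations) be transported to the completions, i.e. verifying that the two residue maps into $\operatorname{X}(G_{\kappa(\mathfrak p)})$ genuinely coincide; and on the other hand the final injectivity $\operatorname{Br}(k)\hookrightarrow\operatorname{Br}(k((x)))$. The latter is precisely where working over an arbitrary, possibly imperfect, field causes no harm: $\mathbb{P}^1_k$ always possesses a rational point whose completion is a Laurent series field over $k$, in which constant Brauer classes survive. This is the feature that distinguishes the present argument from the perfect-field treatment alluded to in Remark \ref{remark:difference}.
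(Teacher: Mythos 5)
Your concluding step is sound and is a genuinely different (and arguably cleaner) ending than the paper's: the paper never passes through $\operatorname{Br}(\mathbb{P}^1_k)$, but instead applies Theorem \ref{H-B} with the place at infinity omitted, trapping the kernel inside $\operatorname{Br}(k[x])$, and then quotes \cite{YS} (or \cite[IV, Exercise 2.20(d)]{Me}) to the effect that $\operatorname{Br}(k[x])\to\operatorname{Br}(\widetilde{k(x)}_{\infty})$ is injective. Your route instead needs the identification $\operatorname{Br}(\mathbb{P}^1_k)=\operatorname{Br}(k)$ for an arbitrary, possibly imperfect, $k$ (which rests on $\operatorname{Br}(\mathbb{P}^1_{k_s})=0$, i.e.\ \cite[III, Corollaire 5.8]{G} over a separably closed field); granting that standard fact, the rational-point/section argument correctly kills $\operatorname{Br}(\mathbb{P}^1_k)$ inside $\operatorname{Br}(k((x)))$.

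The genuine gap is in your first step, the passage from completions to Henselizations. You assert that Lemma \ref{com} gives $\operatorname{Br}(\widehat{k(x)}_{\mathfrak p})/\operatorname{Br}(\widehat{\mathcal O}_{\mathbb{P}^1_k,\mathfrak p})\simeq \operatorname{X}(G_{\kappa(\mathfrak p)})$, and likewise for the Henselization. It does not: Lemma \ref{com} computes $\operatorname{H}^2(K_{nr}/K,(K_{nr})^{*})/\operatorname{Br}(A)$, i.e.\ the quotient of the subgroup of Brauer classes split by the maximal \emph{unramified} extension, not of all of $\operatorname{Br}(K)$. When $\kappa(\mathfrak p)$ is imperfect --- exactly the case this theorem is about --- $\operatorname{Br}(K)$ contains wildly ramified $p$-torsion classes invisible to $\operatorname{X}(G_{\kappa(\mathfrak p)})$; indeed, if your identification were correct, the sequence (\ref{Fad}) would be exact, contradicting Remark \ref{remark:difference}. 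Consequently ``$\alpha$ dies in every $\operatorname{X}(G_{\kappa(\mathfrak p)})$'' does not place $\alpha$ in $\operatorname{Ker}\bigl(\operatorname{Br}(k(x))\to\prod_{\mathfrak p}\operatorname{Br}(\widetilde{k(x)}_{\mathfrak p})/\operatorname{Br}(\widetilde{\mathcal O}_{\mathbb{P}^1_k,\mathfrak p})\bigr)=\operatorname{Br}(\mathbb{P}^1_k)$. What your reduction actually requires is $\operatorname{Ker}\bigl(\operatorname{Br}(\widetilde{k(x)}_{\mathfrak p})\to\operatorname{Br}(\widehat{k(x)}_{\mathfrak p})\bigr)\subseteq\operatorname{Br}(\widetilde{\mathcal O}_{\mathbb{P}^1_k,\mathfrak p})$; the paper secures this by proving the stronger isomorphism $\operatorname{Br}(\widetilde{k(x)}_{\mathfrak p})\simeq\operatorname{Br}(\widehat{k(x)}_{\mathfrak p})$ via \cite[proof of Theorem 1]{K-K} and \cite[Lemma 16]{K}, which handle precisely the wild $p$-part that your residue-map bookkeeping misses. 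Your proof needs this input (or an equivalent substitute) to go through.
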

\begin{proof}
By using the facts \cite[proof of Theorem 1]{K-K} and \cite[p.674, \S3.4, Lemma
 16]{K}, we see that 
\begin{math}
\operatorname{Br}(\widetilde{k(x)}_{\mathfrak
 p})\simeq
\operatorname{Br}(\widehat{k(x)}_{\mathfrak
 p}).
\end{math}
So it is sufficient for the proof of the statement to prove that 
\begin{equation*}
\operatorname{Br}(k(x))\to\displaystyle
\prod_{\mathfrak
 p\in\mathbb{P}_{k}^{1 (1)}}\operatorname{Br}(\widetilde{k(x)}_{\mathfrak
 p})
\end{equation*}
is injective. We denote the point which corresponds to 
\begin{math}
(\frac{1}{x}) \in \spec(k[\frac{1}{x}])\subset \mathbb{P}_{k}^{1}
\end{math}
by $\infty$. Then, by Theorem \ref{H-B}, 
\begin{align*}
&\operatorname{Ker}\left(
\operatorname{Br}(k(x))\to\displaystyle
\prod_{\mathfrak p\in\mathbb{P}_{k}^{1 (1)}}
\operatorname{Br}(\widetilde{k(x)}_{\mathfrak p})
\right) \\
\subset
&\operatorname{Ker}\left(
\operatorname{Br}(k(x))\to
\displaystyle
\prod_{\mathfrak p\in \left((\mathbb{P}_{k}^{1})^{(1)}\setminus \infty\right)}
\operatorname{Br}(\widetilde{R(\mathbb{P}_{k}^{1})}_{\mathfrak p})/
\operatorname{Br}(\widetilde{\mathcal{O}}_{\mathbb{P}_{k}^{1}, \mathfrak p})
\right)\\
=&\operatorname{Br}(k[x]).
\end{align*}
Moreover
\begin{align*}
\operatorname{Ker}\left(
\operatorname{Br}(k(x))\to\displaystyle
\prod_{\mathfrak p\in\mathbb{P}_{k}^{1 (1)}}
\operatorname{Br}(\widetilde{k(x)}_{\mathfrak p})
\right)\subset\operatorname{Ker} 
\left(
\operatorname{Br}(k[x])\to\operatorname{Br}(k(x))\to
\operatorname{Br}(\widetilde{R(\mathbb{P}_{k}^{1})}_{\infty})
\right)
\\
\end{align*}
and
\begin{math}
\operatorname{Ker} 
\left(
\operatorname{Br}(k[x])\to\operatorname{Br}(k(x))\to
\operatorname{Br}(\widetilde{R(\mathbb{P}_{k}^{1})}_{\infty})
\right)=0
\end{math}
by \cite[p.153, IV, Exercise 2.20 (d)]{Me} or \cite{YS}. Therefore 
\begin{equation*}
\operatorname{Ker}\left(
\operatorname{Br}(k(x))\to\displaystyle
\prod_{\mathfrak p\in\mathbb{P}_{k}^{1 (1)}}
\operatorname{Br}(\widetilde{k(x)}_{\mathfrak p})
\right)=0.
\end{equation*}
So the statement follows.
\end{proof}

\begin{cor}\upshape
Let $X$ be an algebraic curve over a seperably closed field such that
 regular and proper. Then, the
 local-global map
\begin{equation*}
\operatorname{Br}(R(X))\to\displaystyle\prod_{\mathfrak
 p\in X^{(1)}}\operatorname{Br}(\widehat{R(X)}_{\mathfrak
 p})
\end{equation*}
is injective.
\end{cor}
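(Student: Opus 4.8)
The plan is to reproduce the argument of Theorem~\ref{Bha} for the proper curve $X$, with the r\^ole of $\operatorname{Br}(k[x])$ now taken by $\operatorname{Br}(X)$, and to close the argument with the vanishing of the latter. Put $K=R(X)$. A regular proper algebraic curve is a $1$-dimensional connected regular scheme, so Theorem~\ref{H-B} (and Corollary~\ref{GB}) apply to $X$. First I would pass from completions to Henselizations just as at the beginning of the proof of Theorem~\ref{Bha}: by \cite[proof of Theorem~1]{K-K} and \cite[p.~674, \S3.4, Lemma~16]{K} one has
\begin{equation*}
\operatorname{Br}(\widetilde{R(X)}_{\mathfrak p})\simeq\operatorname{Br}(\widehat{R(X)}_{\mathfrak p})
\end{equation*}
for each $\mathfrak p\in X^{(1)}$, compatibly with the restriction maps out of $\operatorname{Br}(K)$. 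Hence it is enough to prove that the map
\begin{equation*}
\operatorname{Br}(K)\to\prod_{\mathfrak p\in X^{(1)}}\operatorname{Br}(\widetilde{R(X)}_{\mathfrak p})
\end{equation*}
is injective.

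Next I would cut down the kernel. Composing this last map with the quotient homomorphisms $\operatorname{Br}(\widetilde{R(X)}_{\mathfrak p})\to\operatorname{Br}(\widetilde{R(X)}_{\mathfrak p})/\operatorname{Br}(\widetilde{\mathcal{O}}_{X,\mathfrak p})$ recovers exactly the map of Theorem~\ref{H-B}, whose kernel is $\operatorname{Br}(X)$; therefore
\begin{equation*}
\operatorname{Ker}\left(\operatorname{Br}(K)\to\prod_{\mathfrak p\in X^{(1)}}\operatorname{Br}(\widetilde{R(X)}_{\mathfrak p})\right)\subset\operatorname{Br}(X),
\end{equation*}
and the whole statement reduces to $\operatorname{Br}(X)=0$. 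This last vanishing is where the hypotheses enter: since $X$ is regular and proper over the separably closed field $k$, I would invoke \cite[Corollaire~(5.8)]{G}, the point being that over a separably closed base the higher \'etale cohomology of $\spec(k)$ vanishes, so Grothendieck's computation of the Brauer group of a proper regular curve yields $\operatorname{Br}(X)=0$ just as in the algebraically closed case. Combined with the previous step this gives the desired injectivity.

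The step I expect to be the main obstacle is precisely this vanishing $\operatorname{Br}(X)=0$ when $k$ is only separably closed and hence possibly imperfect, for then $X$ need not be smooth and the residue fields $\kappa(\mathfrak p)$ at closed points may be purely inseparable over $k$. What makes the separably closed hypothesis sufficient is that such a purely inseparable $\kappa(\mathfrak p)$ is itself separably closed, so that $\operatorname{X}(G_{\kappa(\mathfrak p)})=0$; via Corollary~\ref{GB} this already forces $\operatorname{Br}(X)\simeq\operatorname{Br}_{un}(X)$, and it remains to verify that Grothendieck's Corollaire~(5.8) applies verbatim in the imperfect setting to kill this unramified part. Once that point is settled, the rest of the proof is a formal consequence of Theorem~\ref{H-B} and the reduction to Henselizations, exactly as for Theorem~\ref{Bha}.
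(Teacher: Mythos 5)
Your proposal is correct and follows essentially the same route as the paper: the paper's proof is precisely ``Theorem \ref{H-B} plus Grothendieck's \cite[III, Corollaire 5.8]{G}'' (which is stated for separably closed, not just algebraically closed, base fields, so your worry about the imperfect case is already absorbed into that citation), with the completion-to-Henselization reduction borrowed from the proof of Theorem \ref{Bha} exactly as you describe.
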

\begin{proof}
The statement follows from Theorem \ref{H-B} and \cite[III, Corollary 5.8]{G}.
\end{proof}

\begin{rem}
\label{remark:difference}
\upshape
If $k$ is perfect, Theorem \ref{Bha} is proved by using the exact
 sequence
\begin{equation}\label{Fad}
0\to \operatorname{Br}(\mathbb{P}_{k}^{1})\to 
\operatorname{Br}(k(x))\to
\bigoplus_{\mathfrak p\in \mathbb{P}_{k}^{1 (1)}}X(G_{\kappa(\mathfrak p)})
\end{equation}
in \cite{YA}. But it is unknown fact whether (\ref{Fad}) is exact or not in
 the case where $k$ is not perfect and Theorem \ref{Bha} has not been proved.
The sequence (\ref{Fad2}) is exact in Corollary \ref{GB},
 but the sequence (\ref{Fad}) is not exact in the case where $k$ is not
 perfect as follows.

It is known that $k$ is perfect if and only if
\begin{math}
\operatorname{Br}(k)=\operatorname{Br}(k[x]) 
\end{math}
(cf, \cite[p.389, Theorem 7.5]{A-G}). So $\operatorname{Br}(k[x])\neq 0$ in
 the case where $k$ is the separable closure of an imperfect field and 
$\operatorname{Br}(k(x))\neq 0$ because 
\begin{math}
\operatorname{Br}(k[x])\subset \operatorname{Br}(k(x)).
\end{math}
On the other hand,
\begin{math}
X(G_{\kappa(\mathfrak p)})=\{1\}
\end{math}
and
\begin{math}
\operatorname{Br}(\mathbb{P}_{k}^{1})=\operatorname{Br}(k)=\{0\}.
\end{math}
So the sequence (\ref{Fad}) is not exact.
\end{rem}

\textsc{Acknowledgment}. The author would like to thank Professors Nobuo Tsuzuki,
Takao Yamazaki and Akihiko Yukie for valuable comments.

\begin{quote}
\textit{Present Address}: \\
\textsc{Makoto Sakagaito\\
Mathematical Institute\\
Tohoku University\\
Sendai 980-8578\\
Japan
} \\
\textit{e-mail}: sa4m12@math.tohoku.ac.jp
\end{quote}

\begin{thebibliography}{99}
\bibitem{A-G}
\text{M.~Auslander and O.~Goldman},
The Brauer group of a commutative ring,
Trans. Amer. Math. Soc \textbf{97} 1960  
367-409.
\bibitem{G}
\textsc{Grothendieck.~A},
\textit{Dix Expos\'{e}s sur la Cohomologie des Sch\'{e}mas. Le group de Brauer.} (French),
North-Holland, Amsterdam, Masson, Paris, 1968.
\bibitem{K}
\textsc{K.~Kato},
A	generalization of local class field theory by using K-groups, 
II,
J. Fac. Sci. Univ. Tokyo Sec. IA Math. \textbf{27} (1980),
602-683 
\bibitem{K-K}
\textsc{K.~Kato, T.~Kuzumaki}, 
The Dimension of Fields and Algebraic K-Theory,
Journal of Number Theory. \textbf{24} (1986),
229-244.
\bibitem{K-K-S}
\textsc{K, Kato and N, Kurokawa and T, Saito},
\textit{Number theory. 1},
American Mathematical Society.
\bibitem{Me}
\textsc{J.~Milne}, 
\textit{\'{E}tale Cohomology}, 
Princeton Univ. Press, 
Princeton,
1980.
\bibitem{S1}
\textsc{J.~P.~Serre},  
\textit{Galois Cohomology}, 
Springer-Verlag, 
Berlin, 
2002.
\bibitem{YA}
\textsc{A.Yamasaki},
The Brauer group of a rational function field over a perfect field,
J.Number Theory \textbf{65} (1997),
no.2,
295-304.
\bibitem{YS} 
\textsc{Yuan Shuen}, 
On the Brauer groups of local fields, 
Ann. of Math. (2) \textbf{82} (1965),
434--444.
\end{thebibliography}
\end{document}